\newcommand{\lyxdeleted}[3]{}
\numberwithin{equation}{section}
\numberwithin{figure}{section}
\theoremstyle{plain}
\newtheorem{thm}{\protect\theoremname}
  \theoremstyle{plain}
  \newtheorem{lem}[thm]{\protect\lemmaname}
  \theoremstyle{plain}
  \newtheorem{prop}[thm]{\protect\propositionname}
  \theoremstyle{remark}
  \newtheorem*{claim*}{\protect\claimname}
  \providecommand{\claimname}{Claim}
  \providecommand{\lemmaname}{Lemma}
  \providecommand{\propositionname}{Proposition}
\providecommand{\theoremname}{Theorem}
\begin{document}

\title[Criteria for periodicity]{Criteria for periodicity and an application to elliptic functions}

\author{Ehud de Shalit}
\begin{abstract}
Let $P$ and $Q$ be relatively prime integers greater than 1, and
$f$ a real valued discretely supported function on a finite dimensional
real vector space $V$. We prove that if $f_{P}(x)=f(Px)-f(x)$ and
$f_{Q}(x)=f(Qx)-f(x)$ are both $\Lambda$-periodic for some lattice
$\Lambda\subset V$, then so is $f$ (up to a modification at $0$).
This result is used to prove a theorem on the arithmetic of elliptic
function fields. In the last section we discuss the higher rank analogue
of this theorem and explain why it fails in rank 2. A full discussion
of the higher rank case will appear in a forthcoming work.
\end{abstract}

\address{Einstein Institute of Mathematics, Hebrew University of Jerusalem}

\email{ehud.deshalit@mail.huji.ac.il}

\date{January 23, 2020}

\thanks{The author was supported by ISF grant 276/17.}
\maketitle

\section*{Introduction}

Let $V$ be an $r$-dimensional vector space over $\mathbb{R}$ and
$\mathscr{D}$ the abelian group of \emph{discretely supported} functions\footnote{We call $f$ discretely supported if $\{x\in V|f(x)\ne0\}$ has no
accumulation points in $V$.} $f:V\to\mathbb{R}$. If $P\ge2$ is an integer and $f\in\mathscr{D}$
we let
\[
f_{P}(x)=f(Px)-f(x)\in\mathscr{D}.
\]
Note that $f_{P}$ is insensitive to the value of $f$ at $0,$ namely
we may modify $f$ at 0 without affecting $f_{P}$. We henceforth
call $f'$ a \emph{modification} \emph{of} $f$ \emph{at} $0$ if
$f'(x)=f(x)$ at every $x\ne0.$

Let $\Lambda\subset V$ be a lattice. Our interest lies in the subgroup
$\mathscr{P}$ of $f\in\mathscr{D}$ satisfying the periodicity condition
\[
f(x+\lambda)=f(x)\,\,\,(\forall\lambda\in\Lambda).
\]
If $f\in\mathscr{P}$ then clearly $f_{P}(0)=0$ and $f_{P}\in\mathscr{P}$.
The converse is false, even if we allow to modify $f$ at $0$. Indeed,
let $V=\mathbb{R}$, $\Lambda=\mathbb{Z}.$ Let $f_{P}$ be any non-zero
$\mathbb{Z}$-periodic function vanishing at $0$ and
\[
f(x)=\sum_{i=1}^{\infty}f_{P}(x/P^{i}).
\]
Observe that for every $x$ the sum is finite, and that $f\in\mathscr{D}.$
Then $f(Px)-f(x)=f_{P}(x),$ but $f$ need not be periodic. If $f_{P}\ge0$
and is supported on non-integral rational numbers whose denominators
are relatively prime to $P$, then $f$ is even unbounded.

In the first part of this note we prove the following theorem.
\begin{thm}
\label{Periodicity Theorem}Let $P$ and $Q$ be greater than 1 and
relatively prime integers. If both $f_{P}$ and $f_{Q}$ are $\Lambda$-periodic,
so is a suitable modification of $f$ at $0$.
\end{thm}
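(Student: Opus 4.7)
Plan. Set
\[
g_\lambda(x) := f(x+\lambda) - f(x), \qquad \lambda \in \Lambda, \ x \in V.
\]
A modification of $f$ at $0$ is $\Lambda$-periodic iff $g_\lambda(x)=0$ for every $\lambda\in\Lambda$ and every $x\notin\{0,-\lambda\}$; granted this, the common value of $f(\lambda)$ for $\lambda\in\Lambda\setminus\{0\}$ is the correct repair of $f(0)$, and yields a genuine $\Lambda$-periodic function. So the theorem reduces to proving that vanishing.

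The hypothesis that $f_P$ is $\Lambda$-periodic is equivalent to the functional equation
\[
g_\lambda(x) = g_{P\lambda}(Px), \qquad \lambda\in\Lambda,\ x\in V,
\]
and similarly for $Q$. Iterating, $g_\lambda(x) = g_{n\lambda}(nx)$ for every $n$ in the multiplicative semigroup $\mathcal S := \langle P, Q\rangle\subset\mathbb N$. Suppose, towards a contradiction, that $g_{\lambda_0}(x_0)=c\neq 0$ for some $x_0\notin\{0,-\lambda_0\}$. Then $g_{n\lambda_0}(nx_0)=c$ for every $n\in\mathcal S$, so for each $n$ the two points $nx_0$ and $n(x_0+\lambda_0)$ (which project to the same element $n\bar x_0$ of $T^r:=V/\Lambda$) carry distinct $f$-values, and at least one of them lies in $\mathrm{supp}(f)$. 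The projections $E_P, E_Q\subset T^r$ of $\mathrm{supp}(f_P)$ and $\mathrm{supp}(f_Q)$ are both finite, so the exceptional locus $\mathcal B:=\bigcup_{n\in\mathcal S}\phi_n^{-1}(E_P\cup E_Q)\subset T^r$ is only countable; on the complement of $\mathcal B$ one has $f(nx_0)=f(x_0)$ and $f(n(x_0+\lambda_0))=f(x_0+\lambda_0)$ for every $n\in\mathcal S$, so that the two putative support sequences carry fixed nonzero values along the whole orbit.

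The main obstacle is to convert this wealth of constraints into a violation of the discreteness of $\mathrm{supp}(f)$. I would invoke Furstenberg's $\times P, \times Q$ density theorem on $T^r$: because $P$ and $Q$ are coprime (hence multiplicatively independent), the $\mathcal S$-orbit of any non-torsion point of $T^r$ is dense in $T^r$; the torsion case has a finite orbit and must be treated by a direct case analysis. Density furnishes a subsequence $n_k\in\mathcal S$ with $n_k\bar x_0\to \bar 0$, whence $n_k x_0 = \lambda_k+y_k$ with $\lambda_k\in\Lambda$ and $y_k\to 0$ in $V$, and $f(\lambda_k+y_k)=f(x_0)$ is fixed. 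The delicate point is that the lattice translates $\lambda_k$ themselves escape to infinity, so accumulation at a single finite point of $V$ is not immediate; the resolution I expect is to exploit the two-parameter freedom in $n_k=P^{a_k}Q^{b_k}$ together with the coprimality of $P$ and $Q$ to produce, from two distinct members of the orbit with close images in $T^r$, two distinct support points of $f$ within an arbitrarily small ball around a common lattice translate, the desired contradiction. This last combinatorial/diophantine step, turning $T^r$-density into $V$-accumulation, is the technical heart of the argument.
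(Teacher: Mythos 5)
Your reduction to showing $g_\lambda(x):=f(x+\lambda)-f(x)$ vanishes away from $\{0,-\lambda\}$ is sound, and the functional equation $g_\lambda(x)=g_{n\lambda}(nx)$ for $n\in\langle P,Q\rangle$ is exactly the right starting point. But the argument has two genuine gaps, both of which you partially flag yourself. First, the density step does not close. Even if Furstenberg/Berend-type equidistribution gives $n_k\bar x_0\to\bar 0$ in $V/\Lambda$, the actual support points $n_kx_0=\lambda_k+y_k$ escape to infinity in $V$ along distinct $\lambda_k$, so you never get two support points of $f$ crowding into a small ball; discreteness in $V$ is simply not violated by density in $V/\Lambda$ (e.g.\ $\mathbb Z$ is discrete in $\mathbb R$ yet reduces densely mod an irrational lattice). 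The paper's Lemma~\ref{lem:supports} supplies the missing ingredient: it shows that $\mathrm{supp}(f)\bmod\Lambda$ is \emph{finite}, not merely that the orbit is dense. With that in hand the non-torsion contradiction is immediate and elementary — $f(P^aQ^bx_0)$ stabilizes to some nonzero constant for $a,b$ large, and the $P^aQ^b\bar x_0$ are pairwise distinct, contradicting finiteness mod $\Lambda$ — and the heavy dynamical input of Furstenberg is unnecessary.

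Second, and more seriously, you dismiss the torsion case ($x_0\in\mathbb Q\Lambda$) as ``a direct case analysis,'' but this is actually the heart of the theorem and the only place where coprimality of $P$ and $Q$ (as opposed to mere multiplicative independence) is truly needed. The paper devotes Lemma~\ref{lem:pq_Lemma} and Proposition~\ref{prop:rational_case} to it: after rescaling the lattice so that $f$ is supported on $PQ\Lambda$, one has to show that any two nonzero $x\equiv y\pmod{N\Lambda}$ can be linked by a chain of moves that either preserve the $p$-adic valuations at primes dividing $P$ while fixing the prime-to-$P$ part mod $N$, or the analogous relation for $Q$, and then transport the value of $f$ along the chain via the telescoping identity $f(x)=\sum_i f_P(x/P^i)$. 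The diophantine content (the Bézout identity $sP-tQ=k$ in Lemma~\ref{lem:pq_Lemma}) is exactly what fails when $\gcd(P,Q)>1$. Without something replacing this, your proof is incomplete; the paper's Proposition~\ref{prop:non-torsion} shows that multiplicative independence alone already yields periodicity off $\mathbb Q\Lambda$ for some sublattice, so the whole weight of coprimality falls on the torsion locus, precisely the part you skip.
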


The proof is elementary, but somewhat tricky. It is possible that
the theorem remains valid if $P$ and $Q$ are only multiplicatively
independent ($P^{a}=Q^{b}$ for $a,b\in\mathbb{Z}$ if and only if
$a=b=0$). Our methods do not yield this generalization, although
we do obtain a partial result along the way, see Proposition \ref{prop:non-torsion}.

Taking $V=\mathbb{R},$ $\Lambda=\mathbb{Z}$ and $f(x)=1$ if $0\ne x\in\mathbb{Z}$
and $0$ elsewhere, we get that $f_{p}$ is $\mathbb{Z}$-periodic
for any prime $p$. This shows that we can not forgo the modification
at 0, even if we replace it by the condition $f(0)=0.$

\bigskip{}

In the second part of our note we derive from Theorem \ref{Periodicity Theorem}
a theorem on elliptic functions. Here we take, of course, $V=\mathbb{C}.$
The relation with elliptic functions comes from the fact that the
\emph{divisor function $e=div(f),$} (i.e. $e(z)=ord_{z}(f)$) of
a $\Lambda$-elliptic function $f$ lives in $\mathscr{P}$, and determines
$f$ up to a multiplicative constant. We refer to the text for the
precise formulation of our main result, see Theorem \ref{thm:d=00003D1_case_conjecture}.
Besides Theorem \ref{Periodicity Theorem}, its proof uses only basic
facts on elliptic functions (the Abel-Jacobi theorem). Here we mention
an immediate corollary.
\begin{thm}
\label{thm:criterion for ellipticity}Let $P$ and $Q$ be greater
than 1 and relatively prime integers. Let $f$ be a meromorphic function
on $\mathbb{C}$ for which $f_{P}(z)=f(Pz)/f(z)$ and $f_{Q}(z)=f(Qz)/f(z)$
are $\Lambda$-elliptic. Then there exists a lattice $\Lambda'\subset\Lambda$
and an integer $m$ such that $z^{m}f(z)$ is $\Lambda'$-elliptic.
If $\gcd(P-1,Q-1)=D$ we can take $\Lambda'=D\Lambda.$
\end{thm}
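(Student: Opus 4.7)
The plan is to transfer the problem from $f$ to its divisor. Set $e = \mathrm{div}(f) \in \mathscr{D}$; since $f_P$ and $f_Q$ are $\Lambda$-elliptic, their divisors, which equal $e_P(z) = e(Pz) - e(z)$ and $e_Q(z) = e(Qz) - e(z)$, are $\Lambda$-periodic. Theorem~\ref{Periodicity Theorem} thus supplies a $\Lambda$-periodic modification $e'$ of $e$ at $0$. Choosing $m \in \mathbb{Z}$ with $m + \mathrm{ord}_0(f) = e'(0)$, the function $g(z) := z^m f(z)$ has $\mathrm{div}(g) = e'$, and $g_P(z) := g(Pz)/g(z) = P^m f_P(z)$ is still $\Lambda$-elliptic (likewise $g_Q$). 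It therefore suffices to prove that $g$ itself is $\Lambda'$-elliptic for $\Lambda' = D\Lambda$.

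Next I would extract information about $e'$ from the Abel--Jacobi theorem applied to $g_P$. Let $\mathcal{F}$ be a fundamental parallelogram for $\Lambda$, and set $d = \sum_{z \in \mathcal{F}} e'(z)$ and $A = \sum_{z \in \mathcal{F}} e'(z)\,z \in \mathbb{C}/\Lambda$. Changing variable $u = Pz$ and using that a fundamental domain for $P\Lambda$ can be realized as a disjoint union of $P^2$ translates of $\mathcal{F}$ by coset representatives of $\Lambda/P\Lambda$, together with the $\Lambda$-periodicity of $e'$, yields $\sum_{z \in \mathcal{F}} e'(Pz) = P^2 d$ and $\sum_{z \in \mathcal{F}} e'(Pz)\,z \equiv PA \pmod \Lambda$. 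The Abel--Jacobi conditions on $\mathrm{div}(g_P)$ then read $(P^2 - 1)d = 0$ and $(P-1)A \equiv 0 \pmod \Lambda$; the analogous argument with $Q$ gives $(Q-1)A \equiv 0$, and a B\'ezout combination yields $DA \in \Lambda$.

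Setting $\Lambda' = D\Lambda$, a fundamental parallelogram for $\Lambda'$ decomposes into $D^2$ translates of $\mathcal{F}$. Hence the degree of $e'$ on this domain is $D^2 d = 0$, and its Abel sum equals $D^2 A = D \cdot (DA) \in D\Lambda = \Lambda'$, i.e.\ trivial in $\mathbb{C}/\Lambda'$. By Abel--Jacobi there exists a $\Lambda'$-elliptic function $\tilde g$ with $\mathrm{div}(\tilde g) = e'$. Since $g/\tilde g$ has trivial divisor, we may write $g = e^{\phi}\tilde g$ for some entire function $\phi$.

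To conclude, note that $\tilde g(Pz)/\tilde g(z)$ is $\Lambda'$-elliptic --- because $P\Lambda' \subset \Lambda'$ makes both $\tilde g(Pz)$ and $\tilde g(z)$ invariant under $z \mapsto z + \lambda'$ for any $\lambda' \in \Lambda'$ --- and $g_P$ is $\Lambda$-elliptic, hence $\Lambda'$-elliptic. Their ratio $e^{\phi(Pz) - \phi(z)}$ is thus an entire, nowhere-vanishing, $\Lambda'$-elliptic function, so it is bounded on $\mathbb{C}/\Lambda'$ and constant by Liouville. Since the Taylor expansion of $\phi(Pz) - \phi(z)$ at $0$ has no constant term, constancy forces $\phi(Pz) = \phi(z)$, whence $a_n(P^n - 1) = 0$ for $n \ge 1$ in the expansion $\phi = \sum a_n z^n$, so $\phi$ is constant. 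Thus $g = z^m f$ is a scalar multiple of $\tilde g$ and is $\Lambda'$-elliptic. The main obstacle is the Abel--Jacobi bookkeeping in the middle paragraphs --- tracking how $A$ transforms under $z \mapsto Pz$ and recognizing that $D\Lambda$ is precisely the sublattice that absorbs the torsion in $A$; after that, the exponential factor collapses mechanically.
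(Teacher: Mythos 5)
Your argument is correct and mirrors the paper's proof: both pass to divisors, invoke Theorem~\ref{Periodicity Theorem} to obtain a $\Lambda$-periodic modification $e'$, verify the Abel--Jacobi conditions to produce an elliptic $\tilde g$ with divisor $e'$, and conclude by showing the nowhere-vanishing periodic quotient is constant. The only cosmetic differences are that you bypass the group-cohomology packaging of Theorem~\ref{thm:d=00003D1_case_conjecture} and apply B\'ezout up front to get $DA \in \Lambda$, whereas the paper first establishes the Abel-sum condition over $(p-1)\Lambda$ inside the proof of Theorem~\ref{thm:d=00003D1_case_conjecture} and then remarks that $(q-1)\Lambda$, hence $D\Lambda$, works as well.
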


\bigskip{}

In the third and last section we discuss our motivation: an elliptic
analogue of \emph{a conjecture of Loxton and van der Poorten}, proved
by Adamczewski and Bell in \cite{2}. Again we refer to the text for
details. The original proof of this celebrated conjecture relied on
Cobham's theorem in the theory of automata, whose proof in \cite{3}
was notoriously long and complicated. Recently, Schäfke and Singer
found an independent proof \cite{7} that both clarified the ideas
involved, and eliminated the dependence on Cobham's theorem. In fact,
as was known to the experts, the latter \emph{follows} in turn from
the Loxton-van der Poorten conjecture, so \cite{7} yields a conceptual
and relatively short proof of Cobham's theorem as an added bonus.
For more on this circle of ideas and related work, see the survey
paper by Adamczewski \cite{1}.

Although not explicitly stated so in \cite{7}, the mechanism behind
the proof of Schäfke and Singer is cohomological. Reformulating their
work \cite{4} lead us to a similar question in the elliptic set-up,
involving a certain non-abelian cohomology of $\Gamma\simeq\mathbb{Z}^{2}$
with coefficients in $GL_{d}(K)$, where $K$ is the maximal unramified
extension of the field of $\Lambda$-elliptic functions. While theorem
\ref{thm:criterion for ellipticity} amounts to a positive answer
to the case $d=1$ of this question, we give an example showing that
already for $d=2$ the answer is negative.

The complete solution of the question raised in the last part amounts
to a classification of objects that we call, in a forthcoming paper
\cite{8}, \emph{elliptic $(P,Q)$-difference modules. }In that work
we show how a generalization of the periodicity criterion of Theorem
1 leads to a connection between this classification problem and the
classification of vector bundles on elliptic curves, a result of Atiyah
from 1957. For $d=2$ this suffices to complete the classifictaion
of rank-2 elliptic $(P,Q)$-difference modules and deduce that, ``up
to a twist'', our counter-example is the only such counter-example.
We hope to settle the higher rank question completely in \cite{8}.

\section{The theorem on periodic functions}

\subsection{A Lemma}

We begin with an elementary lemma. Fix an integer $N\ge1$. If $0\ne x\in\mathbb{Z}$
and $p$ is a prime number we write $v_{p}(x)$ for the power of $p$
dividing $x$. If $S$ is a set of primes we write
\[
x'_{S}=\prod_{p\in S}p^{-v_{p}(x)}\cdot x,
\]
for the ``prime-to-$S$'' part of $x$ (retaining the sign).

For non-zero $x,y\in\mathbb{Z}$ we define $x\sim_{S}y$ to mean $v_{p}(x)=v_{p}(y)$
for every $p\in S$ and $x'_{S}\equiv y'_{S}\mod N$. This is clearly
an equivalence relation on $\mathbb{Z}$ (where, by convention, the
equivalence class of $0$ is a singleton). For example, when $N=10$
and $S=\{5\},$ $12\sim_{S}32$ and $15\sim_{S}65$ but $15\nsim_{S}35.$
\begin{lem}
\label{lem:pq_Lemma}Let $S$ and $T$ be disjoint, non-empty finite
sets of primes and $\sim$ the equivalence relation on $\mathbb{Z}$
generated by $\sim_{S}$ and $\sim_{T},$ namely $x\sim y$ if there
exists a sequence $x=x^{(1)},\dots,x^{(K)}=y$ such that for every
$i$, $x^{(i)}\sim_{S}x^{(i+1)}$ or $x^{(i)}\sim_{T}x^{(i+1)}.$
Assume that $x,y\ne0.$ Then $x\sim y$ if and only if $x\equiv y\mod N$.
\end{lem}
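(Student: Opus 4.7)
The forward direction is immediate: if $x\sim_S y$, then $x_S=y_S$ and $x'_S\equiv y'_S\pmod N$, so $x=x_S x'_S\equiv x_S y'_S=y\pmod N$; the case $\sim_T$ is symmetric, and the invariant $x\bmod N$ passes to $\sim$ by transitivity.

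For the backward direction, assume $x\equiv y\pmod N$ with $x,y\ne 0$. My central observation is that a $\sim_T$-move sends $x=x_T x'_T$ to $x_T u$, where $u$ is any nonzero element of the arithmetic progression $x'_T+N\mathbb{Z}$ coprime to every prime in $T$ (and symmetrically for $\sim_S$). By the Chinese Remainder Theorem I have substantial freedom in prescribing the valuations of such $u$: at primes $\ell\notin T$ with $\ell\nmid N$, any value of $v_\ell(u)$ is achievable; at primes $\ell\mid N$, the value $v_\ell(u)$ is forced to equal $v_\ell(x'_T)$ when $v_\ell(x'_T)<v_\ell(N)$, and is otherwise at least $v_\ell(N)$ but freely selectable above that bound.

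I would then proceed in two phases, \emph{normalization} and \emph{connection}. For normalization, I apply one $\sim_T$-move followed by one $\sim_S$-move to produce $x^*\sim x$ satisfying $v_\ell(x^*)=0$ for every $\ell\in S\cup T$ with $\ell\nmid N$, and $v_\ell(x^*)=\min(v_\ell(x),v_\ell(N))$ for $\ell\in S\cup T$ with $\ell\mid N$. Normalize $y$ the same way to obtain $y^*$. Since each of these normalized valuations is a function of the residue $x\bmod N=y\bmod N$ alone, one automatically gets $x^*_S=y^*_S$, $x^*_T=y^*_T$, while $x^*\equiv y^*\pmod N$ is preserved throughout. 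To connect $x^*$ to $y^*$, I seek an intermediate $z=x^*_T\cdot x^*_S\cdot w$ with $w$ coprime to all primes in $S\cup T$, such that $x^*\sim_T z\sim_S y^*$. The two move-conditions both reduce, using $x^*\equiv y^*\pmod N$, to a single congruence on $w$ modulo a divisor of $N$, solvable by the Chinese Remainder Theorem.

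The main obstacle I anticipate lies in the connecting step. The relations $\sim_S$ and $\sim_T$ demand agreement modulo $N$ of the \emph{prime-to-$S$} (resp.\ prime-to-$T$) parts, which is strictly stronger than plain mod-$N$ agreement once $\gcd(x^*_S x^*_T,N)>1$; the intermediate $z$ is precisely what bridges that gap. Checking that the CRT constraints defining $w$ are mutually consistent, and likewise that the two normalization moves can be executed while respecting the coprimality requirements on $u$, is where the ``elementary but somewhat tricky'' character of the lemma manifests.
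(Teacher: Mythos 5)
Your approach is sound and genuinely different from the paper's, but it is considerably more roundabout, and the consistency checks you flag as uncertain do need care. To close the gap you identify in the connecting step: writing $A=x^*_S=y^*_S$ and $B=x^*_T=y^*_T$, $x^*=ABu$, $y^*=ABv$, the two move-conditions on $z=ABw$ become $w\equiv u\pmod{N/\gcd(A,N)}$ and $w\equiv v\pmod{N/\gcd(B,N)}$; these are CRT-compatible precisely when $u\equiv v\pmod{\gcd\bigl(N/\gcd(A,N),\,N/\gcd(B,N)\bigr)}$, and the disjointness of $S$ and $T$ (hence $\gcd(A,B)=1$) gives $\gcd\bigl(N/\gcd(A,N),\,N/\gcd(B,N)\bigr)=N/\gcd(AB,N)$, which is exactly the modulus for which $x^*\equiv y^*\pmod N$ forces $u\equiv v$. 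Coprimality of $w$ to $S\cup T$ then follows at primes dividing $N$ from the congruences themselves and elsewhere from the extra CRT freedom. So your chain $x\sim x^*\sim z\sim y^*\sim y$ works, with length about $7$. The paper, by contrast, gets a chain of length $3$ in one shot via B\'ezout rather than CRT: with $P=\prod_{p\in S}p^{v_p(x)+1}$ and $Q=\prod_{q\in T}q^{v_q(y)+1}$ (coprime since $S\cap T=\emptyset$), writing $y=x+kN$ and choosing $s,t$ with $sP-tQ=k$, the single intermediate $z=x+sPN=y+tQN$ satisfies $x\sim_S z\sim_T y$ directly — the exponents in $P$ and $Q$ are rigged so that $z-x$ (resp.\ $z-y$) is divisible by $N\cdot\prod_{p\in S}p^{v_p(x)}$ (resp.\ $N\cdot\prod_{q\in T}q^{v_q(y)}$), which is all that is needed. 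Your normalization/CRT route buys nothing here that the B\'ezout shortcut doesn't give more cheaply, but it is a correct argument once the consistency check is spelled out.
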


\begin{proof}
Let $m_{p}=v_{p}(x)+1$ ($p\in S$) and $n_{q}=v_{q}(y)+1$ ($q\in T).$
Let
\[
P=\prod_{p\in S}p^{m_{p}},\,\,\,Q=\prod_{q\in T}q^{n_{q}}.
\]
Assume that $y=x+kN$ and let $s$ and $t$ satisfy
\[
sP-tQ=k.
\]
Then
\[
z=x+sPN=y+tQN
\]
and it is easily checked that $x\sim_{S}z$ and $z\sim_{T}y.$ Thus
$x\sim y.$ The converse is obvious
\end{proof}

\subsection{A Proposition}

We use the same notation as in the introduction. In particular $V$
is a real $r$-dimensional vector space, and $\Lambda$ is a lattice
in $V$.
\begin{prop}
\label{prop:rational_case}Let $P$ and $Q$ be greater than 1 and
relatively prime integers. Let $f\in\mathscr{D}$ be a function supported
on $PQ\Lambda$. Let
\begin{equation}
f_{P}(x)=f(Px)-f(x),\,\,\,f_{Q}(x)=f(Qx)-f(x).\label{eq:f_p_def}
\end{equation}
If both $f_{P}$ and $f_{Q}$ are $N\Lambda$-periodic then a certain
modification of $f$ at 0 is $N\Lambda$-periodic.
\end{prop}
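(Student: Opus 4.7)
The plan is to split the argument according to whether $N\Lambda \subset PQ\Lambda$, i.e.\ whether $PQ \mid N$.

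Suppose first $PQ \nmid N$, say $Q \nmid N$. Then $f_P$ is $N\Lambda$-periodic but supported in $Q\Lambda$; since $N\Lambda \not\subset Q\Lambda$, for each $x \in Q\Lambda$ one can choose $\lambda \in \Lambda$ with $x + N\lambda \notin Q\Lambda$, forcing $f_P(x) = f_P(x+N\lambda) = 0$. Hence $f_P \equiv 0$, so $f(Px) = f(x)$ for all $x \ne 0$. Iterating $x \mapsto P^{-1}x$ and using the support hypothesis forces any $x$ with $f(x) \ne 0$ to lie in $\bigcap_{k \ge 0} P^{k+1}Q\Lambda = \{0\}$, a contradiction. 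So $f$ vanishes off $0$ and modifying $f(0)$ trivially gives an $N\Lambda$-periodic function.

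Now assume $PQ \mid N$, so $N = PQM$. Then $N\Lambda \subset PQ\Lambda$ is stable, and it suffices to prove $f(x) = f(y)$ for nonzero $x, y \in PQ\Lambda$ with $x \equiv y \pmod{N\Lambda}$. The crux is the following Claim (stated in rank one; the higher-rank case follows by applying the argument coordinate by coordinate): if $x, y \in PQ\mathbb{Z} \setminus \{0\}$ satisfy $x \sim_S y$ for $S = \{p \mid P\}$, then $f(x) = f(y)$. Writing $x = mx'$, $y = my'$ with $m$ the common $S$-part and $x', y'$ coprime to $P$, the support hypothesis forces $P \mid m$ and $Q \mid x'$, $Q \mid y'$. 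Iterating $f_P$-periodicity at the pairs $x/P, y/P$, then $x/P^2, y/P^2$, and so on (each step is legitimate since $x/P^j - y/P^j \in N\mathbb{Z}$) telescopes to $f(x)-f(x/P^k) = f(y)-f(y/P^k)$, where $k$ is chosen so that $P \nmid x/P^k$; but then $x/P^k \notin PQ\mathbb{Z}$ and both sides vanish, giving $f(x) = f(y)$. The symmetric Claim for $\sim_T$ is proved using $f_Q$.

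Finally, Lemma~\ref{lem:pq_Lemma} furnishes a length-two chain $x \sim_S z \sim_T y$ whenever $x \equiv y \pmod N$ are both nonzero. The hypothesis $PQ \mid N$ ensures that $\sim_S$- and $\sim_T$-related partners of an element of $PQ\mathbb{Z}$ stay in $PQ\mathbb{Z}$: equal $S$-parts transfer $P$-divisibility from $x$ to $z$, and $Q \mid N$ combined with the congruence $x' \equiv z' \pmod N$ of coprime-to-$S$ parts transfers $Q$-divisibility. Hence $z \in PQ\mathbb{Z} \setminus \{0\}$, and applying the Claim to each of the two steps yields $f(x) = f(z) = f(y)$. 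The principal obstacle is the Claim itself: arranging the $f_P$-iteration to terminate at a point outside the support of $f$, which the hypothesis $\mathrm{supp}\,f \subset PQ\Lambda$ together with $PQ \mid N$ make possible.
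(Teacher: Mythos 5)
Your proposal follows essentially the same strategy as the paper: produce a $\sim_S$/$\sim_T$-chain from $x$ to $y$ via Lemma~\ref{lem:pq_Lemma}, then use the telescoping identity and $N$-periodicity of $f_P$ (resp.\ $f_Q$) along each link, with the support hypothesis $\mathrm{supp}\,f\subset PQ\Lambda$ forcing the telescoped sum to terminate at a point where $f$ vanishes. Your treatment of the degenerate case $PQ\nmid N$ (forcing $f_P\equiv 0$ or $f_Q\equiv 0$, hence $f\equiv 0$ off the origin) is a genuine, if small, improvement: the paper asserts $PQ\mid N$ directly, which tacitly requires $f_P$ and $f_Q$ to be nonvanishing. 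Your rank-one argument is correct, including the observation that equal $S$-parts and the congruence of prime-to-$S$ parts together ensure $x/P^j\equiv y/P^j\pmod N$ for every $j$ up to the truncation point $k$, and that $x/P^k\notin P\mathbb{Z}$ kills the boundary term.

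The one place the argument is incomplete is the parenthetical ``the higher-rank case follows by applying the argument coordinate by coordinate.'' Applying Lemma~\ref{lem:pq_Lemma} in each coordinate requires every coordinate of both $x$ and $y$ to be nonzero: the $\sim_S$-class of $0$ is a singleton, so if $x_\nu=0$ while $y_\nu=N\neq 0$ (which is perfectly compatible with $x,y\neq 0$ and $x\equiv y\pmod{N\Lambda}$) there is no chain $x_\nu\sim_S z_\nu\sim_T y_\nu$. The paper handles this by first choosing a $\mathbb{Z}$-basis of $\Lambda$ \emph{adapted} to $x$ and $y$, in which all their coordinates are nonzero, and only then identifying $\Lambda$ with $\mathbb{Z}^r$ and running the coordinate-wise argument. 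You should insert this normalization; without it the higher-rank reduction fails on the stated configurations, even though everything else in your write-up is sound.
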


\begin{proof}
Observe first that $f_{P}$ is supported on $Q\Lambda$ and $f_{Q}$
is supported on $P\Lambda$, so $N$ is divisible by $PQ.$ For every
$0\ne x\in V$ equations (\ref{eq:f_p_def}) give the relations
\begin{equation}
f(x)=\sum_{i=1}^{\infty}f_{P}(x/P^{i})=\sum_{j=1}^{\infty}f_{Q}(x/Q^{j}),\label{eq:telescoping}
\end{equation}
both sums being finite. Fix $0\ne x,y\in\Lambda$ such that $x-y\in N\Lambda$.
We shall show that $f(x)=f(y)$. In particular there will be a constant
$c$ such that $f(x)=c$ for every $0\ne x\in N\Lambda$. Modifying
$f$ to obtain the value $c$ at 0 too, we get an $N\Lambda$-periodic
function.

Fix a basis of $\Lambda$ over $\mathbb{Z}$ in which the coordinates
of $x$ and $y$ are all non-zero. This is always possible, and we
call such a basis \emph{adapted} to $x$ and $y$. Using this basis
we identify $\Lambda$ with $\mathbb{Z}^{r}$ and $V$ with $\mathbb{R}^{r}$.
Instead of congruences modulo $N\Lambda$ we write congruences modulo
$N$.

Let $S$ be the set of primes dividing $P$ and $T$ the set of primes
dividing $Q$. For $u$ and $v$ in $\mathbb{Z}^{r}$ write $u\sim_{S}v$
if this equivalence relation holds coordinate-wise. In particular,
if the $\nu$-th coordinate of $u$ vanishes, so must the $\nu$-th
coordinate of $v$.

Since $x\equiv y\mod N$ and none of the coordinates of $x$ or $y$
vanishes, there is a sequence
\[
x=x^{(1)},\dots,x^{(K)}=y
\]
of vectors in $\mathbb{Z}^{r}$ such that for each $l$ we have $x^{(l)}\sim_{S}x^{(l+1)}$
or $x^{(l)}\sim_{T}x^{(l+1)}.$ (In fact the proof of Lemma \ref{lem:pq_Lemma}
shows that we can take $K=3.$) It is therefore enough to show that
if $x\sim_{S}y$ then $f(x)=f(y).$ Assume therefore that $x\sim_{S}y.$

Write $x=P^{m}x'$ and $y=P^{m}y'$ where $x'$ and $y'$ are in $\mathbb{Z}^{r}$
but not in $P\mathbb{Z}^{r}$. That the same $m$ works for both $x$
and $y$ follows from the fact that for each $1\le\nu\le r,$ the
$p$-adic valuations of the $\nu$-th coordinates $v_{p}(x_{\nu})=v_{p}(y_{\nu})$
for every prime $p|P.$ Since $f_{P}$ is supported on $\mathbb{Z}^{r}$,
equation (\ref{eq:telescoping}) implies
\[
f(x)=\sum_{i=0}^{m-1}f_{P}(P^{i}x').
\]
But $x\sim_{S}y$ implies that $P^{i}x'\equiv P^{i}y'\mod N$ . Since
$f_{P}$ is $N$-periodic we get that
\[
f(x)=\sum_{i=0}^{m-1}f_{P}(P^{i}y')=f(y).
\]
This concludes the proof of the Proposition.
\end{proof}

\subsection{The proof of Theorem \ref{Periodicity Theorem}}

Let $f\in\mathscr{D}$ be as in the Theorem, $P,Q\ge2$. Let $\Lambda$
be a lattice of periodicity for $f_{P}$ and $f_{Q}$$.$ Our goal
is to show that if $(P,Q)=1$ the function $f$, appropriately modified
at $0$, is also $\Lambda$-periodic.

Denote by $S_{P},S_{Q}\subset V/\Lambda$ the supports of $f_{P}$
and $f_{Q}$ and by $\widetilde{S}_{P}$ and $\widetilde{S}_{Q}$
their pre-images in $V$$.$ Let $\widetilde{S}$ be the support of
$f$.
\begin{lem}
\label{lem:supports}Assume that $P$ and $Q$ are multiplicatively
independent. Then the projection $\widetilde{S}\mod\Lambda$ is finite.
\end{lem}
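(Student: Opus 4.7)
The plan is to reduce the finiteness of $\widetilde{S}\bmod\Lambda$ to finiteness of orbit intersections in the torus $V/\Lambda$, and then extract finiteness from multiplicative independence by a $2\times 2$ linear algebra trick. For $0\ne x\in V$, since $x/P^{N}\to 0$ and $\widetilde{S}$ has no accumulation at $0$, one has $f(x/P^{N})=0$ for all large $N$; the telescoping identity $\sum_{i=1}^{N}f_{P}(x/P^{i})=f(x)-f(x/P^{N})$ therefore collapses to $f(x)=\sum_{i\ge 1}f_{P}(x/P^{i})$, a finite sum. Hence $x\in\widetilde{S}\setminus\{0\}$ forces $x/P^{i}\in\widetilde{S}_{P}$ for some $i\ge 1$, and symmetrically $x/Q^{j}\in\widetilde{S}_{Q}$ for some $j\ge 1$. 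Projecting modulo $\Lambda$, and using finiteness of $S_{P}$ and $S_{Q}$, the Lemma reduces to showing that for each pair $(\bar s,\bar t)\in S_{P}\times S_{Q}$ the set $\{P^{i}\bar s:i\ge 1\}\cap\{Q^{j}\bar t:j\ge 1\}$ is finite in $V/\Lambda$.

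If $\bar s$ or $\bar t$ has finite order in $V/\Lambda$ the corresponding orbit is already finite, so assume both are non-torsion; then $i\mapsto P^{i}\bar s$ and $j\mapsto Q^{j}\bar t$ are injective, because a coincidence $P^{i}\bar s=P^{i'}\bar s$ with $i\ne i'$ would force $\bar s$ to be $(P^{i}-P^{i'})$-torsion. I claim the intersection then has at most one element. Indeed, suppose $\bar x_{k}=P^{i_{k}}\bar s=Q^{j_{k}}\bar t$ for $k=1,2$ are two distinct elements of it; by injectivity $i_{1}\ne i_{2}$ and $j_{1}\ne j_{2}$. Choosing lifts $s,t\in V$ we obtain a $2\times 2$ integer linear system
\[
\begin{pmatrix} P^{i_{1}} & -Q^{j_{1}}\\ P^{i_{2}} & -Q^{j_{2}}\end{pmatrix}\begin{pmatrix} s\\ t\end{pmatrix}\in\Lambda^{2},
\]
with determinant $\Delta=P^{i_{2}}Q^{j_{1}}-P^{i_{1}}Q^{j_{2}}$. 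If $\Delta\ne 0$, Cramer's rule places $s$ and $t$ in $\tfrac{1}{|\Delta|}\Lambda$, contradicting non-torsion; if $\Delta=0$, then $P^{i_{2}-i_{1}}=Q^{j_{1}-j_{2}}$ is a nontrivial multiplicative relation between $P$ and $Q$, contradicting multiplicative independence. Either branch yields a contradiction, so the intersection contains at most one element and the Lemma follows.

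The main conceptual obstacle is seeing that one need only compare \emph{two} candidate solutions at a time; this converts what might otherwise look like a hard Pillai- or $S$-unit-type problem into the clean determinantal dichotomy above, where multiplicative independence enters solely through the impossibility of $P^{a}=Q^{b}$ with $(a,b)\ne(0,0)$.
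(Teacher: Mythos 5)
Your proof is correct and follows essentially the same route as the paper's: reduce to the finiteness of $\bigl(\bigcup_{n\ge 1}P^{n}S_{P}\bigr)\cap\bigl(\bigcup_{m\ge 1}Q^{m}S_{Q}\bigr)$, dispose of the torsion case by orbit finiteness, and in the non-torsion case use multiplicative independence to force a contradiction from two coincidences. The paper eliminates one variable by multiplying one congruence by $P^{n'-n}$ and subtracting, whereas you package the same elimination as a $2\times 2$ determinant with the adjugate; this is a cosmetic, not a conceptual, difference.
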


\begin{proof}
Equation (\ref{eq:telescoping}) holds for every $x\in V$ and shows
that $\widetilde{S}$ is contained in
\[
\bigcup_{n=1}^{\infty}P^{n}\widetilde{S}_{P}\cap\bigcup_{m=1}^{\infty}Q^{m}\widetilde{S}_{Q}.
\]
It is therefore enough to prove that $\bigcup_{n=1}^{\infty}P^{n}S_{P}\cap\bigcup_{m=1}^{\infty}Q^{m}S_{Q}$
is finite. The sets $S_{P}$ and $S_{Q}$ are of course finite. Let
$\bar{z}=z\mod\Lambda\in S_{P}$ and $\bar{w}=w\mod\Lambda\in S_{Q}$,
$n$ and $m$ be such that $P^{n}\bar{z}=Q^{m}\bar{w}.$ If $z$ (hence
also $w$) lies in $M=\mathbb{Q}\Lambda$ then there are altogether
only finitely many points of the form $P^{n}\bar{z}$ in $V/\Lambda$.
It is therefore enough to assume that $z,w\notin M$ and prove that
$(n,m)$ are then uniquely determined by $(z,w).$ But suppose $P^{n}z\equiv Q^{m}w\mod\Lambda$
and also $P^{n'}z\equiv Q^{m'}w\mod\Lambda,$ where without loss of
generality we may assume $n'>n.$ Then
\[
(P^{n'-n}Q^{m}-Q^{m'})w\in\Lambda,
\]
contradicting the assumption that $w\notin M$. In the last step we
used the multiplicative independence of $P$ and $Q$ to guarantee
that the coefficient of $w$ is non-zero.
\end{proof}
We continue with the proof, assuming only that $P$ and $Q$ are multiplicatively
independent. Let $S$ be the projection of $\widetilde{S}$ modulo
$\Lambda$. Pick $z\in\widetilde{S}_{P},$ $z\notin M=\mathbb{Q}\Lambda$.
We call $\{z,Pz,P^{2}z,...\}\cap\widetilde{S}_{P}$ the $P$\emph{-chain
through} $z$. Since $z\notin M$ all the $P^{n}z$ have distinct
images modulo $\Lambda$, so only finitely many of them belong to
$\widetilde{S}_{P}.$ Let $P^{n(z)}z$ be the last one, and call $n(z)\ge0$
the \emph{exponent} of the $P$-chain through $z$. Call a $P$-chain
\emph{primitive }if it is not properly contained in any other $P$-chain,
i.e. if none of the points $P^{n}z,$ $n<0,$ belongs to $\widetilde{S}_{P}$.
Since $\widetilde{S}_{P}$ is $\Lambda$-periodic, $n(z+\lambda)=n(z)$
for $\lambda\in\Lambda.$ It follows from the discreteness of $\widetilde{S}_{P}$
that
\[
n_{P}=1+\max_{z\in\widetilde{S}_{P},\,z\notin M}n(z)<\infty.
\]

Let $\{z,Pz,\dots,P^{n(z)}z\}\cap\widetilde{S}_{P}$ be a primitive
$P$-chain through $z\notin M.$ We claim that
\begin{equation}
\sum_{i=0}^{n(z)}f_{P}(P^{i}z)=0.\label{eq:chain vanishing}
\end{equation}
Indeed, for every $n>n(z)$
\[
f(P^{n}z)=\sum_{i=1}^{\infty}f_{P}(P^{n-i}z)=\sum_{i=0}^{n(z)}f_{P}(P^{i}z)
\]
so the assertion follows from Lemma \ref{lem:supports}, since otherwise
all $P^{n}z,$ $n>n(z)$, would lie in $\widetilde{S}$, and they
are all distinct modulo $\Lambda$. It follows also that $f(P^{n}z)=0$
if $n<0$ or $n>n(z).$

\bigskip{}

Let $\lambda\in\Lambda.$ Assume $z\notin M$ and $f(z)\ne0.$ Then
\[
f(z)=\sum_{i=1}^{n_{P}}f_{P}(P^{-i}z).
\]
The reason we can stop at $i=n_{P}$ is that if $i_{0}$ is the largest
index such that $f_{P}(P^{-i}z)\ne0$ and $i_{0}>n_{P}$ then $f(z)=\sum_{i=1}^{\infty}f_{P}(P^{-i}z)=0$
by (\ref{eq:chain vanishing}) applied to $P^{-i_{0}}z$ instead of
$z$. Thus if $f(z)\ne0$ we must have $i_{0}\le n_{P}$. By the periodicity
of $f_{P}$ we now have
\[
f(z)=\sum_{i=1}^{n_{P}}f_{P}(P^{-i}(z+P^{2n_{P}}\lambda)).
\]
The last sum is equal to $\sum_{i=1}^{2n_{P}}f_{P}(P^{-i}(z+P^{2n_{P}}\lambda))$
because the terms with $n_{P}<i\le2n_{P}$ all vanish as they are
equal to $f(P^{-i}z),$ which, as we have just seen, vanish. Since
one of the terms $f_{P}(P^{-i}(z+P^{2n_{P}}\lambda))$ with $i\le n_{P}$
must not vanish, and the exponent of any primitive $P$-chain is less
than $n_{P},$ the terms $f_{P}(P^{-i}(z+P^{2n_{P}}\lambda))$ with
$i>2n_{P}$ all vanish. We conclude that
\[
f(z)=\sum_{i=1}^{\infty}f_{P}(P^{-i}(z+P^{2n_{P}}\lambda))=f(z+P^{2n_{P}}\lambda).
\]
To sum up, we have shown that if $z\notin M$ and $f(z)\ne0$ then
$f(z)=f(z+P^{2n_{P}}\lambda)$ for every $\lambda\in\Lambda$. This
of course stays true if $f(z)=0$, for if $f(z+P^{2n_{P}}\lambda)\ne0$
switch the roles of $z$ and $z+P^{2n_{P}}\lambda$ and replace $\lambda$
by $-\lambda$.

Repeating the same arguments with $Q$ replacing $P$ we get that
\[
f(z)=f(z+q^{2n_{Q}}\lambda)
\]
for all $z\notin M.$ If $\gcd(P,Q)=1,$ the lattice generated by
$P^{2n_{P}}\Lambda$ and $Q^{2n_{Q}}\Lambda$ is $\Lambda$. We therefore
get the following conclusion:
\begin{prop}
\label{prop:non-torsion}Let $f\in\mathscr{D}$ and assume that $P$
and $Q$ are multiplicatively independent. If $f_{P}$ and $f_{Q}$
are $\Lambda$-periodic then there exists a lattice $\Lambda'\subset\Lambda$
(depending on $f$) such that for every $z\notin M=\mathbb{Q}\Lambda$
and $\lambda\in\Lambda'$
\[
f(z+\lambda)=f(z).
\]

If furthermore $\gcd(P,Q)=1,$ we may take $\Lambda'=\Lambda$.
\end{prop}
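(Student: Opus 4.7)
The plan is to combine the telescoping identity $(\ref{eq:telescoping})$ with Lemma $\ref{lem:supports}$ to show that, for $z \notin M$, the sum $f(z) = \sum_{i \geq 1} f_P(P^{-i}z)$ not only has finitely many nonzero terms, but truncates at a \emph{uniform} index $n_P$ independent of $z$. Once this uniformity is in place, translating $z$ by $P^{2n_P}\lambda$ will interact cleanly with the $\Lambda$-periodicity of $f_P$, since $P^{-i}(P^{2n_P}\lambda) \in \Lambda$ for all $i \leq 2n_P$.

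The main input for the truncation is a \emph{chain-vanishing} identity: for any $z \notin M$, if $\{z, Pz, \dots, P^{n(z)}z\} \cap \widetilde{S}_P$ is a primitive $P$-chain (so $P^{-k}z \notin \widetilde{S}_P$ for $k \geq 1$, and $P^{n(z)}z$ is its last element in $\widetilde{S}_P$), then $\sum_{i=0}^{n(z)} f_P(P^i z) = 0$. Indeed, for $n > n(z)$, the telescoping identity evaluates $f(P^n z)$ as exactly this sum, so a nonzero value would force all $P^n z$ with $n > n(z)$ to lie in $\widetilde{S}$, producing infinitely many classes modulo $\Lambda$ and contradicting Lemma $\ref{lem:supports}$. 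From this one deduces that, for every $z \notin M$ with $f(z) \neq 0$, the largest index $i$ for which $f_P(P^{-i}z) \neq 0$ is bounded by $n_P := 1 + \sup n(w)$, the supremum ranging over $w \in \widetilde{S}_P \setminus M$. The quantity $n_P$ is finite by $\Lambda$-periodicity and discreteness of $\widetilde{S}_P$ combined with Lemma $\ref{lem:supports}$.

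With the truncation in hand, one writes $f(z) = \sum_{i=1}^{n_P} f_P(P^{-i}z)$ and, since $P^{2n_P - i}\lambda \in \Lambda$ for $i \leq 2n_P$, periodicity of $f_P$ rewrites this as $\sum_{i=1}^{n_P} f_P(P^{-i}(z + P^{2n_P}\lambda))$. It remains to check that the tail of the expansion of $f(z + P^{2n_P}\lambda)$ past index $n_P$ contributes nothing: for $n_P < i \leq 2n_P$ periodicity reduces the term to $f_P(P^{-i}z) = 0$ by choice of $n_P$; for $i > 2n_P$ a nonzero term would place $P^{-i}(z + P^{2n_P}\lambda)$ at the bottom of a primitive $P$-chain of length exceeding $n_P$, contradicting the chain-vanishing identity applied at $z + P^{2n_P}\lambda$. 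This gives $f(z) = f(z + P^{2n_P}\lambda)$ whenever $f(z) \neq 0$; the case $f(z) = 0$ follows by swapping the roles of $z$ and $z + P^{2n_P}\lambda$.

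Running the same argument with $Q$ in place of $P$ yields $f(z) = f(z + Q^{2n_Q}\lambda)$ for $z \notin M$. Taking $\Lambda'$ to be the sublattice of $\Lambda$ generated by $P^{2n_P}\Lambda$ and $Q^{2n_Q}\Lambda$ proves the first assertion; when $\gcd(P,Q) = 1$, also $\gcd(P^{2n_P}, Q^{2n_Q}) = 1$, so B\'ezout yields $\Lambda' = \Lambda$. The main obstacle is the bookkeeping of the tail terms after the shift by $P^{2n_P}\lambda$: the exponent $2n_P$ (rather than $n_P$) is chosen exactly so that the initial $n_P$ terms transport correctly \emph{and} no fresh nonzero terms can appear past index $n_P$ in the shifted expansion.
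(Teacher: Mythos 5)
Your argument reproduces the paper's proof step for step: the chain-vanishing identity, the uniform bound $n_P$, the truncation $f(z)=\sum_{i=1}^{n_P}f_P(P^{-i}z)$ when $f(z)\ne 0$, the shift by $P^{2n_P}\lambda$, the symmetric treatment of $Q$, and the final B\'ezout step with $\Lambda'=\langle P^{2n_P}\Lambda,Q^{2n_Q}\Lambda\rangle$. So the approach is the same, and the overall structure is correct.

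One step is mis-justified, though the conclusion you draw from it is right. For $i>2n_P$ you assert that a nonzero term at index $i$ would put $P^{-i}(z+P^{2n_P}\lambda)$ at the bottom of a primitive $P$-chain of \emph{length exceeding} $n_P$, contradicting the chain-vanishing identity. Neither clause is quite what happens: the primitive chain through that point still has exponent $<n_P$ (no long chain is produced), and the chain-vanishing identity is not what gets contradicted. The correct argument is: let $i_0$ be the largest index with $f_P(P^{-i_0}w)\ne 0$ for $w=z+P^{2n_P}\lambda$, and suppose $i_0>2n_P$. The primitive chain through $P^{-i_0}w$ has exponent $n(P^{-i_0}w)\le n_P-1$, so its last element sits at index $j=i_0-n(P^{-i_0}w)>n_P$, and by the definition of the exponent every $P^{-i}w$ with $i<j$ lies \emph{outside} $\widetilde S_P$. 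Thus $f_P(P^{-i}w)=0$ for all $i\le n_P$, whence $f(z)=\sum_{i=1}^{n_P}f_P(P^{-i}w)=0$, contradicting $f(z)\ne 0$. The contradiction is with the bound on the exponent and the assumption $f(z)\ne 0$, not with the chain-vanishing identity; you should replace that one sentence accordingly.
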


It remains to examine periodicity of $f$ at points $z\in M$. For
that we must assume that $P$ and $Q$ are relatively prime, as in
Theorem \ref{Periodicity Theorem}. By Lemma \ref{lem:supports} the
support of $f$ is finite modulo $\Lambda.$ Let $N$ be an integer
divisible by $PQ$ such that, with $\Lambda'=N^{-1}\Lambda$, the
function $f$ is supported on $PQ\Lambda'.$ Changing the lattice,
we are reduced to the following.
\begin{claim*}
Let $\Lambda'\subset V$ be a lattice, $N$ an integer divisible by
$PQ$ and $f:PQ\Lambda'\to\mathbb{R}$ a function. Assume that $f_{P}$
and $f_{Q}$, which are supported on $\Lambda'$, are $N\Lambda'$-periodic
for some integer $N$. Then a suitable modification of $f$ at $0$
is $N\Lambda'$-periodic.
\end{claim*}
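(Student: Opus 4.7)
The plan is to recognize that the Claim is nothing other than Proposition \ref{prop:rational_case} applied with $\Lambda'$ playing the role of $\Lambda$. Indeed, Proposition \ref{prop:rational_case} is formulated for an arbitrary lattice in $V$, and its conclusion---that a modification of $f$ at $0$ is $N\Lambda$-periodic---becomes exactly the conclusion of the Claim after the substitution $\Lambda \leftrightarrow \Lambda'$. Extending $f$ by zero off of $PQ\Lambda'$ places it in $\mathscr{D}$ and makes the hypotheses of the Proposition literally coincide with those of the Claim.

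The mild discrepancy between the two statements is only bookkeeping. The Proposition does not postulate a support condition on $f_P, f_Q$, because it derives one automatically from the support hypothesis on $f$: namely, $f_P$ is supported on $Q\Lambda$ and $f_Q$ on $P\Lambda$, and this in turn forces $PQ \mid N$. The Claim has the divisibility condition built into the hypothesis and states only the weaker support ``$f_P, f_Q$ supported on $\Lambda'$''; both are compatible with what the Proposition derives. So feeding the setup into Proposition \ref{prop:rational_case} immediately delivers the conclusion.

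In short, there is no new obstacle to overcome; the real work of the Claim was already carried out in Proposition \ref{prop:rational_case} via Lemma \ref{lem:pq_Lemma} and the telescoping identities \eqref{eq:telescoping}. The purpose of isolating the Claim here is just to record that, after using Proposition \ref{prop:non-torsion} to obtain $\Lambda$-periodicity away from $M$ and Lemma \ref{lem:supports} to conclude that only finitely many torsion cosets survive in $\widetilde{S}$, one is reduced, by a rescaling of the lattice, to precisely the situation already handled by Proposition \ref{prop:rational_case}. Invoking that proposition completes the proof of Theorem \ref{Periodicity Theorem}.
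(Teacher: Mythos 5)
Correct, and it is exactly the paper's own proof: the paper simply states ``This was proved in Proposition \ref{prop:rational_case},'' and your observation that extending $f$ by zero off $PQ\Lambda'$ and relabelling $\Lambda'$ as $\Lambda$ makes the two statements coincide is precisely the (implicit) bookkeeping the paper relies on.
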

This was proved in Proposition \ref{prop:rational_case}.

\section{A theorem on elliptic functions}

Let $\Lambda\subset\mathbb{C}$ be a lattice and $M=\mathbb{Q}\Lambda.$
Let $K$ be the field of meromorphic functions on $\mathbb{C}$ which
are periodic with respect to \emph{some} lattice $\Lambda'\subset M$.
We call such functions $M$\emph{-elliptic}. If $K_{\Lambda}$ is
the field of $\Lambda$-elliptic functions, then $K$ is the maximal
unramified extension of $K_{\Lambda}.$ 

Let $p$ and $q$ be multiplicatively independent natural numbers\footnote{For typographical reasons, we let $p$ and $q$ stand for what was
denoted $P$ and $Q$ in the previous section. The primes dividing
$P$ or $Q$ will not show up anymore.}. Consider the automorphisms
\[
\sigma f(z)=f(pz),\,\,\,\tau f(z)=f(qz)
\]
of the field $K$. Let $\widehat{K}=\mathbb{C}((z))$ and embed $K$
in $\widehat{K}$ assigning to any $f$ its Laurent series at $0$.

Let
\[
\Gamma=\left\langle \sigma,\tau\right\rangle \subset Aut(K)
\]
be the group of automorphisms of $K$ generated by $\sigma$ and $\tau.$
As $\sigma$ and $\tau$ commute, and $p$ and $q$ are multiplicatively
independent, $\Gamma\simeq\mathbb{Z}^{2}$. The group $\Gamma$ acts
of course also on $\widehat{K}$. The goal of this section is to show
how Theorem \ref{Periodicity Theorem} can be used to prove the following.
\begin{thm}
\label{thm:d=00003D1_case_conjecture}Assume that $p$ and $q$ are
relatively prime. Then the map
\[
H^{1}(\Gamma,\mathbb{C}^{\times})\to H^{1}(\Gamma,K^{\times})
\]
is an isomorphism.
\end{thm}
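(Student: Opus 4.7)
The plan is to unpack the cohomology in terms of divisors, apply Theorem \ref{Periodicity Theorem}, and reconstruct a suitable $c \in K^\times$ via Abel--Jacobi. A $1$-cocycle for $\Gamma = \langle\sigma,\tau\rangle \simeq \mathbb{Z}^2$ with values in $K^\times$ is a pair $(a,b) \in (K^\times)^2$ with $a\cdot\sigma(b) = b\cdot\tau(a)$, and a coboundary is $(\sigma(c)/c,\tau(c)/c)$ for $c \in K^\times$; since $\Gamma$ acts trivially on $\mathbb{C}^\times$, one has $H^1(\Gamma,\mathbb{C}^\times) = (\mathbb{C}^\times)^2$ and the map in question carries $(\alpha,\beta)$ to the class of the constant cocycle. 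Throughout I work with the divisor map $\mathrm{div}\colon K^\times \to \mathscr{D}$, which turns the multiplicative cocycle condition into the additive compatibility $(e_a)_q = (e_b)_p$ between $e_a = \mathrm{div}(a)$ and $e_b = \mathrm{div}(b)$.

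For injectivity, suppose $c \in K^\times$ satisfies $\sigma(c) = \alpha c$ and $\tau(c) = \beta c$ with $\alpha,\beta \in \mathbb{C}^\times$, and set $e = \mathrm{div}(c)$. Then $e(pz) = e(z) = e(qz)$ for every $z$; if some $z_0 \ne 0$ lay in the support of $e$, then $e(z_0/p^n) = e(z_0) \ne 0$ for all $n \ge 0$, and $z_0/p^n \to 0$ contradicts the discreteness of $\mathrm{supp}(e)$. So $\mathrm{supp}(e) \subset \{0\}$, and combined with $\Lambda'$-periodicity of $e$ (for a lattice $\Lambda' \subset M$ of which $c$ is elliptic) this forces $e = 0$. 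Then $c$ is a nowhere-vanishing holomorphic elliptic function, hence a nonzero constant, and $\sigma(c) = c$ gives $\alpha = 1$; similarly $\beta = 1$.

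For surjectivity, comparing leading Laurent coefficients at $z = 0$ in $a(z)b(pz) = b(z)a(qz)$ yields $p^{e_b(0)} = q^{e_a(0)}$, and the multiplicative independence of $p, q$ forces $e_a(0) = e_b(0) = 0$. Define
\begin{equation*}
    f(z) = \sum_{i \ge 1} e_a(z/p^i) \quad (z \ne 0),\qquad f(0) = 0;
\end{equation*}
each sum is finite, and telescoping together with $(e_a)_q = (e_b)_p$ gives $f_p = e_a$ and $f_q = e_b$ on all of $V$ (both sides vanish at $z = 0$). The argument in the proof of Lemma \ref{lem:supports} (using only the two formulas $f(z) = \sum_i e_a(z/p^i) = \sum_j e_b(z/q^j)$ and multiplicative independence) shows $f \in \mathscr{D}$. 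Letting $\Lambda \subset M$ be a common period lattice for $e_a$ and $e_b$, Theorem \ref{Periodicity Theorem} then produces a modification of $f$ at $0$ that is $\Lambda$-periodic, without disturbing $f_p = e_a$ or $f_q = e_b$.

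It remains to realize $f$ as $\mathrm{div}(c)$ for some $c \in K^\times$. Summing over $V/\Lambda$ and using that multiplication by $p$ is $p^2$-to-one, the identity $\sum f_p = (p^2-1)\sum f$ combined with $\sum f_p = \sum e_a = 0$ gives $\sum f = 0$; analogously the Abel--Jacobi sum $A := \sum z\,f(z) \in \mathbb{C}/\Lambda$ satisfies $(p-1)A \equiv \sum z\,e_a(z) \pmod{\Lambda}$. Since $a$ is elliptic for its own period lattice $\Lambda_a \supset \Lambda$, Abel--Jacobi for $a$ places the right-hand side in the finite subgroup $\Lambda_a/\Lambda$, so $A$ is torsion in $\mathbb{C}/\Lambda$; say $nA \in \Lambda$. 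The sublattice $\Lambda' := n\Lambda \subset M$ then has $[\Lambda:\Lambda'] = n^2$, and the Abel--Jacobi sum computed with respect to $\Lambda'$ is $n^2 A \in n\Lambda = \Lambda'$. Abel--Jacobi therefore yields a $\Lambda'$-elliptic $c \in K^\times$ with $\mathrm{div}(c) = f$; then $a\cdot c/\sigma(c)$ has divisor $e_a - f_p = 0$ and is elliptic, hence a nonzero constant $\alpha \in \mathbb{C}^\times$, and likewise $b\cdot c/\tau(c) = \beta \in \mathbb{C}^\times$. Thus the class of $(a,b)$ is the image of $(\alpha,\beta)$. The main obstacle I anticipate is the Abel--Jacobi step: showing the obstruction $A$ is torsion in $\mathbb{C}/\Lambda$ uses the full ellipticity of $a$ (not just periodicity of its divisor), and one must quantify by how much $\Lambda$ must be refined to kill it.
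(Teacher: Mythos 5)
Your proof is correct and follows essentially the same route as the paper's: pass to divisor cocycles, use the constant Laurent coefficient of the commutation relation to see the divisor cocycle vanishes at $0$, construct the telescoping primitive, apply Theorem \ref{Periodicity Theorem}, and finish with Abel--Jacobi. The only cosmetic difference is in the Abel--Jacobi step, where the paper fixes $\Lambda$ to be a period lattice of the cocycle \emph{functions} $f_{\sigma},f_{\tau}$ themselves (not merely of their divisors), so that $\sum_{\Pi}z\,d_{\sigma}(z)\in\Lambda$ at once and $\Lambda'=(p-1)\Lambda$ works explicitly; your detour through the torsion of $\Lambda_{a}/\Lambda$ reaches the same conclusion a little less directly (in fact the $\Lambda$-fundamental-domain sum already lies in $\Lambda$, being $[\Lambda_{a}:\Lambda]$ times the $\Lambda_{a}$-Abel sum with $[\Lambda_{a}:\Lambda]\Lambda_{a}\subset\Lambda$), but the argument is sound.
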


\begin{proof}
In this section we reserve the letter $f$ to denote elliptic functions.
Typically, if $f\in K^{\times}$,
\[
e(z)=ord_{z}(f)\in\mathscr{D}
\]
and is of course periodic.

The injectivity statement is trivial: if $f$ is $\Lambda$-elliptic
for some $\Lambda\subset M$ and $f(pz)/f(z)$ is constant then it
is easily seen that $f$ had to be constant to begin with.

For the surjectivity consider $\mathcal{D},$ the group of all the
functions $d:\mathbb{C}\to\mathbb{Z}$ with discrete support, which
are $\Lambda$-periodic for some lattice $\Lambda\subset M$. Let
$\mathcal{D}^{0}$ be the subgroup of all $d\in\mathcal{D}$ which
are of degree 0 on $\mathbb{C}/\Lambda,$ for some (equivalently,
any) lattice $\Lambda$ modulo which they are periodic. Let $\mathcal{P}\subset\mathcal{D}^{0}$
be the subgroup of principal divisors, i.e. $d$ for which there exists
a function $f\in K$ with $ord_{z}(f)=d(z),$ or $d=div(f).$ By the
Abel-Jacobi theorem a $d\in\mathcal{D}^{0}$ is principal if and only
if for some (equivalently, any) lattice $\Lambda$ modulo which $d$
is periodic, $\sum_{z\in\mathbb{C}/\Lambda}zd(z)\in M$.

Let $\{f_{\gamma}\}$ be a 1-cocycle with values in $K^{\times}$,
and choose a lattice $\Lambda$ such that $f_{\sigma}$ and $f_{\tau}$
are $\Lambda$-elliptic. From $\sigma\tau=\tau\sigma$ we get
\[
f_{\tau}(pz)/f_{\tau}(z)=f_{\sigma}(qz)/f_{\sigma}(z).
\]
If $\{d_{\gamma}\}$ is the 1-cocycle with values in $\mathcal{P}$
defined by $d_{\gamma}(z)=ord_{z}(f_{\gamma})$ then, looking at the
constant term on both sides of the last equation, we get
\[
p^{d_{\tau}(0)}=q^{d_{\sigma}(0)},
\]
hence $d_{\tau}(0)=d_{\sigma}(0)=0$. This implies that $d_{\gamma}(0)=0$
for every $\gamma\in\Gamma$. For lack of a better terminology we
call such a 1-cocycle $\{d_{\gamma}\}$ \emph{special.}

From the exactness of
\[
0\to\mathbb{C}^{\times}\to K^{\times}\to\mathcal{P}\to0
\]
we see that it is enough to prove that our special 1-cocycle $\{d_{\gamma}\}$
is a coboundary. As before, from $\sigma\tau=\tau\sigma$ we get
\begin{equation}
d_{\tau}(pz)-d_{\tau}(z)=d_{\sigma}(qz)-d_{\sigma}(z).\label{eq:cocycle-1}
\end{equation}
We have to show that there exists an $e\in\mathcal{P}$ with
\begin{equation}
d_{\sigma}(z)=e(pz)-e(z),\,\,\,d_{\tau}(z)=e(qz)-e(z).\label{eq:coboundary-1}
\end{equation}

From the equation (\ref{eq:cocycle-1}) we get
\[
d_{\tau}(z)=d_{\tau}(z/p)+d_{\sigma}(qz/p)-d_{\sigma}(z/p)=
\]
\[
d_{\tau}(z/p^{2})+d_{\sigma}(qz/p^{2})+d_{\sigma}(qz/p)-d_{\sigma}(z/p^{2})-d_{\sigma}(z/p)=\cdots
\]
\[
=\sum_{n=1}^{\infty}(d_{\sigma}(qz/p^{n})-d_{\sigma}(z/p^{n})).
\]
The sum is finite by the assumption on the supports. Thus, by telescopy,
\begin{equation}
\tilde{e}(z)=\sum_{m=1}^{\infty}d_{\tau}(z/q^{m})=\sum_{n=1}^{\infty}d_{\sigma}(z/p^{n})\label{eq:telescopy-1}
\end{equation}
satisfies (\ref{eq:coboundary-1}). Its support is discrete.

We are now in a position to apply Theorem \ref{Periodicity Theorem}.
Suitably modifying $\tilde{e}$ at $0$ we get a function $e\in\mathcal{D}$
satisfying (\ref{eq:coboundary-1}), in fact of the same periodicity
lattice $\Lambda$ of $d_{\sigma}$ and $d_{\tau}$. It remains to
show that $e\in\mathcal{P}$, i.e. that it satisfies the two conditions
prescribed by the Abel-Jacobi theorem.

Let $\Pi$ be a parllelogram which is a fundamental domain for $\mathbb{C}/\Lambda.$
Since $d_{\sigma}\in\mathcal{D}^{0}$,
\[
0=\sum_{z\in\Pi}d_{\sigma}(z)=\sum_{z\in p\Pi}e(z)-\sum_{z\in\Pi}e(z)=(p^{2}-1)\sum_{z\in\Pi}e(z),
\]
so $e\in\mathcal{D}^{0}$. Similarly
\[
\sum_{z\in\Pi}zd_{\sigma}(z)=\sum_{z\in\Pi}z(e(pz)-e(z))=p^{-1}\sum_{z\in p\Pi}ze(z)-\sum_{z\in\Pi}ze(z)=(p-1)\sum_{z\in\Pi}ze(z).
\]
Since $f_{\sigma}$ is $\Lambda$-elliptic, the left hand side lies
in $\Lambda.$ If $\Lambda'=(p-1)\Lambda$ and $\Pi'$ is a fundamental
domain for $\mathbb{C}/\Lambda'$ consisting of $(p-1)^{2}$ translates
of $\Pi$ then
\[
\sum_{z\in\Pi'}ze(z)=(p-1)^{2}\sum_{z\in\Pi}ze(z)=(p-1)\sum_{z\in\Pi}zd_{\sigma}(z)\in\Lambda'.
\]
By Abel-Jacobi, $e$ is the divisor of a $\Lambda'$-elliptic function.

We have found an $e\in\mathcal{P}$ such that $d_{\gamma}=\gamma(e)-e$
for every $\gamma\in\Gamma.$ This concludes the proof of the theorem.
\end{proof}
Let us turn to the proof of Theorem \ref{thm:criterion for ellipticity}.
Let $f$ be meromorphic in $\mathbb{C}$ and assume that
\[
f_{p}(z)=f(pz)/f(z),\,\,\,f_{q}(z)=f(qz)/f(z)
\]
are $\Lambda$-elliptic. Let
\[
d_{\sigma}(z)=ord_{z}(f_{p}),\,\,\,d_{\tau}(z)=ord_{z}(f_{q}).
\]
The relation
\[
d_{\sigma}(qz)-d_{\sigma}(z)=d_{\tau}(pz)-d_{\tau}(z)
\]
guarantees that we can extend $d$ to a special $1$-cocycle $\{d_{\gamma}\}$
of $\Gamma$ in $\mathcal{P}$. The proof of Theorem \ref{thm:d=00003D1_case_conjecture}
above yields an $e\in\mathcal{P}$ for which $d_{\gamma}=\gamma(e)-e.$
Let $\tilde{f}$ be the $\Lambda'$-elliptic function whose divisor
is $e$. Let $g=\tilde{f}/f$. Then $g(pz)/g(z)$ is periodic and
has no poles or zeros, so must be constant. This immediately implies
that $g(z)=cz^{m}$ for some $c$ and $m.$ The theorem follows.

The proof shows that $\tilde{f}$ is $\Lambda'$-periodic, where $\Lambda'=(p-1)\Lambda.$
By the same token we can take $\Lambda'=(q-1)\Lambda.$ It follows
that we can take, as the periodicity lattice of $\tilde{f},$ the
lattice $D\Lambda,$ where $D$ is the greatest common divisor of
$p-1$ and $q-1$.

\section{Higher rank analogues}

Theorem \ref{thm:d=00003D1_case_conjecture} raises a question in
non-abelian cohomology. Let $d\ge1.$ The group $\Gamma\subset Aut(K)$
acts on $GL_{d}(K)$ via its action on $K$.

\bigskip{}

\textbf{Question: }\label{conj:Main Conjecture}Assume that $p$ and
$q$ are multiplicatively independent and $d\ge1.$ Is the map of
pointed sets
\[
H^{1}(\Gamma,GL_{d}(\mathbb{C}))\to H^{1}(\Gamma,GL_{d}(K))
\]
bijective? If not, is it injective? Can we identify its image?\bigskip{}

When $K=\bigcup\mathbb{C}(z^{1/n}),$ $\sigma(f)(z)=f(z^{p})$ and
$\tau(f)(z)=f(z^{q}),$ the analogous map is bijective. This is due
entirely to Schäfke and Singer, even if \cite{7} falls short of formulating
it in cohomological terms. See also \cite{4}.

In \cite{8} we show that the answer to the above question is \emph{negative}
as soon as $d\ge2.$ The\emph{ reason} for the different behavior
in the case of $\mathbb{G}_{m}=\mathbb{P}^{1}-\{0,\infty\},$ the
algebraic group underlying the rational case studied in \cite{7},
and the elliptic case, turns out to be that while every vector bundle
on $\mathbb{G}_{m}$ is trivial, there are non-trivial vector bundles
on elliptic curves which are \emph{invariant under pull-back by all
isogenies}. These vector bundles have been classified by Atiyah in
1957, and sometimes bear his name.

In \cite{8} we prove a vast generalization of the periodicity criterion
proved in Theorem \ref{Periodicity Theorem}. Using it, we associate
to a given class in $H^{1}(\Gamma,GL_{d}(K))$ a vector bundle on
$\mathbb{C}/\Lambda$, for all small enough $\Lambda.$ It turns out
that the map $H^{1}(\Gamma,GL_{d}(\mathbb{C}))\to H^{1}(\Gamma,GL_{d}(K))$
is injective, and its image consists of the classes whose associated
vector bundle is trivial.

We end by giving an example of a cohomolgy class in $H^{1}(\Gamma,GL_{2}(K))$
that does not come from a similar class over $\mathbb{C}$ by base
change. Let $\Lambda\subset\mathbb{C}$ be a lattice and let $\zeta(z)=\zeta(z,\Lambda)$
be the Weierstrass zeta function of $\Lambda.$ Recall that
\[
\zeta'(z,\Lambda)=\wp(z,\Lambda)
\]
is the Weierstrass $\wp$-function, but for $0\ne\omega\in\Lambda$
\[
\zeta(z+\omega)-\zeta(z)=\eta(\omega,\Lambda)
\]
is a non-zero constant. Let
\[
\begin{cases}
\begin{array}{c}
g_{p}(z)=p\zeta(qz)-\zeta(pqz)\\
g_{q}(z)=q\zeta(pz)-\zeta(pqz)
\end{array}.\end{cases}
\]
Clearly, $g_{p},g_{q}$ are $\Lambda$-elliptic functions. Let
\[
A=\left(\begin{array}{cc}
1 & g_{p}(z)\\
0 & p
\end{array}\right),\,\,\,B=\left(\begin{array}{cc}
1 & g_{q}(z)\\
0 & q
\end{array}\right).
\]
It can be checked that there is a cocycle of $\Gamma$ in $GL_{2}(K)$
sending $\sigma^{-1}$ to $A$ and $\tau^{-1}$ to $B$. Since $\Gamma$
is free abelian, this amounts to checking the \emph{consistency equation}
\[
A(z/q)B(z)=B(z/p)A(z)
\]
 which the reader may easily verify. 

In \cite{8} we show that this cocycle represents a cohomology class
that does not arise form a similar class over $\mathbb{C}.$ In the
language of difference equations, the pair $(A,B)$ is not guage-equivalent
to a pair $(A_{0},B_{0})$ of scalar matrices. In fact, the results
of \cite{8} show that every class in $H^{1}(\Gamma,GL_{2}(K))$ that
is not in the image of $H^{1}(\Gamma,GL_{2}(\mathbb{C}))$ is represented
by a pair of matrices $(aA,bB)$ with $A,B$ as above and $a,b\in\mathbb{C}^{\times}.$
Similar, but more complicated, results hold in higher ranks.

\end{document}